\newtheorem{theorem}{Theorem}[section]
\newtheorem{lemma}[theorem]{Lemma}
\newtheorem{cor}[theorem]{Corollary}
\theoremstyle{definition}
\numberwithin{equation}{section}
\begin{document}

\title[Primitive disk complexes]{Note on primitive disk complexes}

\author[S. Cho]{Sangbum Cho}\thanks{
The first-named author is supported by the National Research Foundation of Korea(NRF) grant funded by the Korea government(MSIT) (NRF-202100000002302).}
\address{Department of Mathematics Education,
Hanyang University, Seoul 04763, Korea}
\email{scho@hanyang.ac.kr}

\author[J. H. Lee]{Jung Hoon Lee}
\address{Department of Mathematics and Institute of Pure and Applied Mathematics,
Jeonbuk National University, Jeonju 54896, Korea}
\email{junghoon@jbnu.ac.kr}

\subjclass[2020]{Primary: 57K30}
\keywords{Heegaard splitting, disk complex, primitive disk}


\begin{abstract}
Given a Heegaard splitting of the $3$-sphere, the primitive disk complex is defined to be the full subcomplex of the disk complex for one of the handlebodies of the splitting.
It is an open question whether the primitive disk complex is connected or not when the genus of the splitting is greater than three.
In this note, we prove that a quotient of the primitive disk complex, called the homotopy primitive disk complex, is connected.
\end{abstract}

\maketitle

\section{Introduction}

It is well known that every closed orientable $3$-manifold can be decomposed into two handlebodies of the same genus.
Such a decomposition is called a {\it Heegaard splitting} of the manifold, and the genus of the handlebodies is called the {\it genus} of the splitting.
In particular, the $3$-sphere admits a genus-$g$ Heegaard splitting for each $g \geq 0$.
We denote by $(V, W; \Sigma)$ a genus-$g$ Heegaard splitting of the $3$-sphere.
That is, $V$ and $W$ are genus-$g$ handlebodies whose union is the $3$-sphere, and $\Sigma = V \cap W = \partial V = \partial W$ is a genus-$g$ closed orientable surface, called the {\it Heegaard surface} of the splitting.

An essential disk $D$ in $V$ is called a {\it primitive disk} if there exists an essential disk $D'$ in $W$ such that $\partial D$ intersects $\partial D'$ transversely in a single point.
Such a disk $D'$ is called a {\it dual disk} of $D$.
The disk $D'$ is also primitive in $W$ with a dual disk $D$.
The {\it primitive disk complex} for the handlebody $V$, denoted by $\mathcal P(V)$, is defined as follows.
The vertices of $\mathcal P(V)$ are the isotopy classes of the primitive disks in $V$, and $k+1$ distinct vertices span a $k$-simplex if they have pairwise disjoint representatives.
Equivalently, the primitive disk complex is the full subcomplex of the disk complex for the handlebody $V$ spanned by the vertices whose representatives are primitive disks in $V$.
It is easy to see that the primitive disk complex $\mathcal P(V)$ for a genus-$g$ splitting of the $3$-sphere for $g \geq 2$ is a $(3g - 4)$-dimensional complex which is not locally finite.
For the genus-$0$ splitting, $\mathcal P(V)$ is empty and for the genus-$1$ splitting, $\mathcal P(V)$ consists only of a single vertex.

For the genus-$2$ splitting of the $3$-sphere, the combinatorial structure of $\mathcal P(V)$ has been well understood from \cite{C}.
It is a $2$-dimensional connected complex, even it is contractible.
For the genus-$3$ splitting of the $3$-sphere, Zupan \cite{Z} showed that the reducing sphere complex for the splitting is connected, which implies quickly that $\mathcal P(V)$ is connected either.
For the splittings of genus greater than $3$, it is still an open question whether $\mathcal P(V)$ is connected or not.

In this note, we define the {\it homotopy primitive disk complex}, denoted by $\mathcal{HP}(V)$, which can be considered as a quotient complex of $\mathcal P(V)$.
We say that two primitive disks $D$ and $E$ are {\it equivalent} if $\partial D$ and $\partial E$ represent the same element of the free group $\pi_1(W)$.
In other words, the boundary circles $\partial D$ and $\partial E$ in $\Sigma = \partial W$ are free homotopic in $W$.
Then vertices of $\mathcal{HP}(V)$ are defined to be the equivalence classes of primitive disks in $V$, and $k+1$ distinct vertices span a $k$-simplex if they have pairwise disjoint representatives.
The main result is the following theorem.

\begin{theorem}
\label{thm:main_theorem}
Let $D$ and $E$ be any two primitive disks in $V$.
Then there exists a sequence $D = E_1, E_2, \ldots, E_n$ of primitive disks in $V$ satisfying that
\begin{itemize}
\item $E_n$ is equivalent to $E$, and
\item for each $i \in \{1, 2, \ldots, n-1\}$, either $E_i = E_{i+1}$, or $E_i$ and $E_{i+1}$ are disjoint from each other and further $E_i$ and $E_{i+1}$ have dual disks $E'_i$ and $E'_{i+1}$ respectively such that $E'_i$ is disjoint from $E_{i+1} \cup E'_{i+1}$ and $E'_{i+1}$ is disjoint from $E_i \cup E'_i$.
\end{itemize}
\end{theorem}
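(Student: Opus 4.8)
The plan is to carry everything over to $\pi_1(W)\cong F_g$. Two primitive disks are equivalent exactly when their boundary curves are conjugate in $\pi_1(W)$, and a primitive disk has boundary representing a \emph{primitive} element of $\pi_1(W)$ (a dual disk of $D$ exhibits a destabilization in which $[\partial D]$ becomes a free‑factor generator), so the conclusion of the theorem amounts to reaching a primitive disk whose boundary is conjugate to $[\partial E]$. The geometric device I would use is a \emph{standard configuration}: a cut system $\{F_1,\dots,F_g\}$ of $V$ together with a cut system $\{F_1',\dots,F_g'\}$ of $W$ with $|\partial F_i\cap\partial F_j'|=\delta_{ij}$; for such a configuration the classes $[\partial F_1],\dots,[\partial F_g]$ form a free basis of $\pi_1(W)$ and $F_i'$ is a dual disk of $F_i$. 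I would record two facts. First, every primitive disk is the disk $F_1$ of some standard configuration: by Waldhausen's theorem we may assume $(V,W;\Sigma)$ is the standard genus‑$g$ splitting and, after a homeomorphism of the pair (which preserves both the equivalence relation and the relation appearing in the theorem), that $D$ is a standard primitive disk; alternatively one destabilizes along a dual pair of $D$, notes that the resulting genus‑$(g-1)$ splitting of $S^3$ is standard, hence has a standard configuration, and adjoins the pair back. Second, within a standard configuration any two of the disks $F_i,F_j$ are strongly compatible in the sense of the theorem, with dual disks $F_i',F_j'$, because all four boundary curves are disjoint on $\Sigma$ apart from the two forced intersections $F_i\cap F_i'$ and $F_j\cap F_j'$.

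With this in hand I would use only two kinds of move on a standard configuration with distinguished disk $F_1$. The first is \emph{relabelling}: keep the same disks but declare some $F_j$ the distinguished one; by the second fact, $(F_1,F_j)$ is then a legitimate step of the sequence in the theorem. The second is \emph{sliding an auxiliary disk}: handle‑slide $F_j$ ($j\ge 2$) over some $F_k$ along a suitable band and perform the companion slide on the dual system, obtaining a new standard configuration in which the distinguished disk $F_1$ is literally unchanged — so this is a step of the form $E_i=E_{i+1}$ (this is exactly the role of the first alternative in the theorem). On the basis $(c_1,\dots,c_g)=([\partial F_1],\dots,[\partial F_g])$ the second move realizes $c_j\mapsto c_jc_k^{\pm1}$ with $c_1$ untouched; since such transformations (together with permutations and inversions of the auxiliary coordinates, also realized by second moves) generate the stabilizer of $c_1$ in $\Aut(F_g)$, iterating second moves lets me replace $(c_2,\dots,c_g)$ by \emph{any} $(g-1)$ elements completing $c_1$ to a free basis. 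Hence a relabelling step followed by further second moves moves the distinguished disk from a primitive conjugacy class $[c]$ to $[c']$ whenever $\{c,c'\}$ extends to a free basis of $\pi_1(W)$.

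Finally I would invoke connectivity of the graph whose vertices are conjugacy classes of primitive elements of $F_g$, with $[c]$ joined to $[c']$ when suitable representatives extend jointly to a free basis. This is elementary: any two free bases of $F_g$ differ by finitely many elementary Nielsen transformations, and changing a basis by one such transformation moves the conjugacy class of its first vector a distance at most two in this graph — a permutation or transvection affecting the first vector yields a vector lying in a common basis with the old one, and an inversion is accommodated by a detour through a second basis vector. Starting from $[\partial D]$ and walking to $[\partial E]$ along this graph, and reading off the distinguished disk after each move, produces the required sequence $D=E_1,E_2,\dots,E_n$ with $E_n$ equivalent to $E$. The step I expect to need the most care is the claim used in the middle paragraph that the handle slides can be carried out so as to keep a standard configuration while realizing the intended Nielsen transformation on $\pi_1(W)$ exactly, with no parasitic conjugating factor, together with the (routine but fiddly) verification that destabilizing along a dual pair of $D$ genuinely places $D$ inside a standard configuration; the passage to the homotopy quotient is what makes the argument go through, since we are never forced to slide the distinguished disk itself — which would be the genuinely hard move — only the auxiliary disks, and we only have to land on a disk equivalent to $E$.
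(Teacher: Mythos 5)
Your argument is sound in outline but organized quite differently from the paper's, and the difference is worth spelling out. The paper makes no attempt to stay inside the stabilizer of $[\partial D]$: it takes \emph{any} orientation preserving homeomorphism $\phi$ of $(V,W;\Sigma)$ with $\phi(D)=E$, writes the induced automorphism $\eta$ of $\pi_1(W)$ as a product of the elementary Nielsen transformations $\alpha,\beta,\gamma,\delta$, realizes each generator by an explicit isotopy of the standard splitting (a handle slide, a $\pi$-rotation of a handle, two handle permutations) chosen so that every disk of the standard dual pair is carried to an equal or disjoint disk satisfying the strong dual-disk condition, and then reads off the orbit $D,\phi_1(D),\phi_2\phi_1(D),\dots$ as the required sequence; the endpoint $\phi_\eta(D)$ is automatically equivalent to $E=\phi(D)$ because $\phi$ and $\phi_\eta$ induce the same automorphism of $\pi_1(W)$. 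Your route instead keeps the distinguished disk fixed during auxiliary slides and changes it only by relabelling inside a standard configuration, which routes the whole proof through the connectivity of your graph of primitive conjugacy classes. Your distance-at-most-two estimate for each Nielsen generator, including the detour for the inversion, is correct, and the two geometric facts you flag (every primitive disk sits in a dual pair; a slide of $F_j$ over $F_k$ admits a companion dual slide restoring a standard configuration while fixing $F_1$) are exactly the content of the paper's explicit constructions, so nothing there should fail.

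The one place you owe a real argument is the assertion that right transvections $c_j\mapsto c_jc_k^{\pm1}$ for $j\ge 2$, together with permutations and inversions of $c_2,\dots,c_g$, generate the stabilizer of $c_1$ in $\Aut(F_g)$ (equivalently, that auxiliary slides can place a representative of any conjugacy class completing $c_1$ to a free basis among $F_2,\dots,F_g$). This is true, but it is a genuine lemma — it requires a Nielsen reduction or peak-reduction argument showing that a length-one first coordinate never needs to be modified, or a citation to McCool-type results on stabilizers of basis elements — and it is precisely the cost of insisting that the distinguished disk move only by relabelling. The paper's bookkeeping sidesteps it entirely: since the final disk only has to be \emph{equivalent} to $E$, one is free to realize all of $\Aut(F_g)$ rather than a stabilizer subgroup. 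If you keep your structure, prove that generation statement rather than asserting it; otherwise the argument simplifies to the paper's.
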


The theorem implies the following immediately.

\begin{cor}
The homotopy primitive disk complex $\mathcal{HP}(V)$ is connected.
\end{cor}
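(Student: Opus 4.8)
The corollary is immediate from Theorem~\ref{thm:main_theorem}: after passing to equivalence classes the sequence $E_1,\dots,E_n$ becomes a walk in the $1$-skeleton of $\mathcal{HP}(V)$ from $[D]$ to $[E]$, since consecutive terms, being either equal or disjoint, represent vertices that coincide or are joined by an edge. So the real task is to prove the theorem, and I would do this by induction on the geometric intersection number $n(D,E):=|\partial D\cap\partial E|$, with $\partial D$ and $\partial E$ in minimal position on $\Sigma$; if $D$ and $E$ are equivalent the length-one sequence suffices, so assume otherwise.

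\emph{Inductive step.} Suppose $n(D,E)>0$ and fix dual disks $D'$ of $D$ and $E'$ of $E$. Since $V$ is irreducible we may take $D\cap E$ to be a union of arcs; let $\Delta\subset E$ be the disk cut off by an outermost one, so $\partial\Delta=\alpha\cup\beta$ with $\alpha=\Delta\cap D$ and $\beta\subset\partial E$. Surgering $D$ along $\Delta$ yields two disks $D_1,D_2\subset\Nbd(D\cup\Delta)$, each disjoint from $D$, with $n(D_1,E)+n(D_2,E)\le n(D,E)-2$; at least one of them, say $D_1$, is essential because $D$ is. The key point is that one can choose $D_1$ primitive and equipped with a dual disk $D_1'$ so that the primitive pairs $\{D,D'\}$ and $\{D_1,D_1'\}$ satisfy the disjointness conditions of the theorem. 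To see primitivity I would pass to $\pi_1(W)$: the element $[\partial D]$ is primitive in the free group $\pi_1(W)$, and the surgery expresses it as a product built from $[\partial D_1]$, $[\partial D_2]$ and a conjugator determined by $\Delta$; combining this with primitivity of $[\partial E]$ and the normal-form combinatorics of primitive elements in free groups, one argues that some surgery disk carries a primitive conjugacy class. This is exactly the stage at which the passage to $\mathcal{HP}(V)$ is used: a vertex there records only the conjugacy class of the boundary, and the clause $E_i=E_{i+1}$ is available, so it is enough to produce a primitive disk in the equivalence class dictated by the surgery rather than the surgered disk on the nose. A dual disk $D_1'$ is then manufactured from $D'$ and $E'$ by a band sum followed by removal of innermost circles and outermost arcs inside the irreducible handlebody $W$. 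As $n(D_1,E)<n(D,E)$, the inductive hypothesis gives a sequence from $D_1$ to a disk equivalent to $E$, and prefixing $D$ completes the step.

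\emph{Base case.} When $n(D,E)=0$ the disks $D$ and $E$ are disjoint, and I would connect them by a single strong-configuration step, or by one through an auxiliary primitive disk. One route is an induction on the genus: cutting the splitting along the reducing sphere $\partial\Nbd(D\cup D')$ produces a genus-$(g-1)$ Heegaard splitting of the $3$-sphere in which $E$ persists as a primitive disk, the base cases $g\le 3$ being covered by the known connectivity of $\mathcal P(V)$ from \cite{C} and \cite{Z}. A more direct route uses the standard fact that $D'$ extends to a complete meridian system of $W$ met by $\partial D$ only in the single point $\partial D\cap\partial D'$; reading $\partial E$ against this system lets one produce a dual disk of $D$ missing $E\cup E'$ and a dual disk of $E$ missing $D\cup D'$.

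\emph{Main obstacle.} The delicate part is the primitivity bookkeeping in the inductive step: an arbitrary disk obtained by surgering a primitive disk need not be primitive, and one must simultaneously guarantee that some surgery disk is primitive, that its dual disk can be produced disjointly from $D$ and $D'$ with the two primitive pairs mutually disjoint in the strong sense, and that the intersection number strictly drops. I expect this to require a careful analysis of how the arcs of $\partial E\cap D$ straddle the single point $\partial D\cap\partial D'$, together with the combinatorics of bases of the free group $\pi_1(W)$; it is precisely because these arguments control $[\partial D]$ only up to conjugacy that they yield connectivity of $\mathcal{HP}(V)$ and not, so far, of $\mathcal P(V)$.
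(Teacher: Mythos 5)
Your deduction of the corollary from Theorem~\ref{thm:main_theorem} is correct and is exactly the paper's (one-line) argument: consecutive disks in the sequence are equal or disjoint, so their equivalence classes give a walk in the $1$-skeleton of $\mathcal{HP}(V)$ from $[D]$ to $[E_n]=[E]$. If the theorem is taken as given, you are done after your first two sentences.

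The remainder of your proposal is an attempt at the theorem itself by a route entirely different from the paper's, and it has a genuine gap at the step you yourself flag as the ``key point.'' The assertion that some disk obtained by outermost-arc surgery on $D$ along $E$ is primitive (even only up to conjugacy of its boundary in $\pi_1(W)$), and moreover admits a dual disk forming the required mutually disjoint configuration with $\{D,D'\}$, is not proved and is not a routine normal-form computation: controlling primitivity of surgery disks is essentially the obstruction that leaves connectivity of $\mathcal P(V)$ open for $g\geq 4$, and it is not clear that passing to conjugacy classes removes it, since a decomposition of a primitive element of a free group arising from surgery need not exhibit a primitive factor. The paper avoids surgery entirely: by Waldhausen's uniqueness of Heegaard splittings of $S^3$ there is an orientation-preserving homeomorphism $\phi$ of $(V,W;\Sigma)$ with $\phi(D)=E$; the induced automorphism $\eta$ of $\pi_1(W)$ is written as a product of elementary Nielsen transformations, each realized (Lemma~\ref{lem:Nielsen_transformations}) by an explicit isotopy of the splitting that carries a primitive system to an equal or disjoint one together with the required dual disks, and the composite $\phi_\eta$ induces $\eta$, so $\phi_\eta(D)$ and $\phi(D)=E$ have freely homotopic boundaries in $W$ and are therefore equivalent. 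This is precisely why the theorem controls the endpoint only up to equivalence and why the conclusion is connectivity of $\mathcal{HP}(V)$ rather than of $\mathcal P(V)$. Your proposed base case via the reducing sphere $\partial\Nbd(D\cup D')$ has a similar difficulty: when $D$ and $E$ are disjoint, $E$ need not miss $D'$, so it does not automatically persist as a disk of the destabilized splitting.
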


Throughout the paper, $V$ and $W$ will denote two genus-$g$ handlebodies of the Heegaard splitting $(V, W; \Sigma)$ of the $3$-sphere for $g \geq 2$.
It is known that the Heegaard splitting of the $3$-sphere is unique up to isotopy for each genus \cite{W}.
Thus any orientation preserving homeomorphism of the splitting $(V, W; \Sigma)$ can be described as an isotopy from the identity map of the $3$-sphere to the homeomorphism.
We also remark that given any two primitive disks in $V$ there exists an orientation preserving homeomorphism of $(V, W; \Sigma)$ taking one to the other one.

A collection $\mathcal D = \{D_1, D_2, \ldots, D_g\}$ of pairwise disjoint, non-isotopic essential disks in $V$ is called a {\it primitive system} of $V$ if there exists a collection $\mathcal D' = \{D'_1, D'_2, \ldots, D'_g\}$ of pairwise disjoint, non-isotopic essential disks in $W$ such that $|\partial D_i \cap \partial D'_j| = \delta_{ij}$.
The collection $\mathcal D'$ is also a primitive system of $W$.
We say that $\mathcal D$ and $\mathcal D'$ are {\it dual systems} to each other, and that $\{\mathcal D, \mathcal D'\}$ simply a {\it dual pair}.
Note that $D_1 \cup D_2 \cup \cdots \cup D_g$ and $D'_1 \cup D'_2 \cup \cdots \cup D'_g$ cut $V$ and $W$ into $3$-balls respectively.

The fundamental group $\pi_1(W)$ of $W$ is the free group of rank $g$.
If any oriented simple closed curve $C$ in $\Sigma$ intersects $\partial D'_1 \cup \partial D'_2 \cup \cdots \cup \partial D'_g$ transversely, then $C$ represents an element of $\pi_1(W) = \langle x_1, x_2, \ldots, x_g \rangle$ in a natural way.
We first fix a base point inside $W$ and prepare an arc $a$ connecting the base point to a point on $C$ so that $a$ does not intersect each $D'_i$ for $i \in \{1, 2, \ldots, g\}$.
Then assigning symbols $x_1, x_2, \ldots, x_g$ to the oriented circles $\partial D'_1, \partial D'_2, \ldots, \partial D'_g$ respectively, we obtain a word $w$ in terms of $x_1^{\pm 1}, x_2^{\pm 1}, \ldots, x_g^{\pm 1}$ by reading off consecutive intersections of the loop $C \cup a$ with $\partial D'_1, \partial D'_2, \ldots, \partial D'_g$.
We say that such a word $w$ is {\it determined} by $C$ {\it with respect to} the system $\mathcal D'$.
Of course, the words $w$ determined by $C$ depend on the choice of the arc $a$, but they are all equivalent up to cyclic permutation.
In particular, the circles $\partial D_1, \partial D_2, \ldots, \partial D_g$ determine the generators $x_1, x_2, \ldots, x_g$ respectively.

\section{Proof of the main theorem}

It is well known that the automorphism group of $\pi_1(W) = \langle x_1, x_2, \ldots, x_g \rangle$ is generated by the {\it elementary Nielsen transformations} $\alpha, \beta, \gamma$, and $\delta$ described as follows.

\begin{itemize}
\item $\alpha(x_1) = x_1 x_2$, and $\alpha(x_j) = x_j$ for $j = 2, 3, \ldots, g$,
\item $\beta(x_1) = x^{-1}_1$ and $\beta(x_j) = x_j$ for $j = 2, 3, \ldots, g$,
\item $\gamma(x_1) = x_2$, $\gamma(x_2) = x_1$, and $\gamma(x_j) = x_j$ for $j = 3, 4, \ldots, g$, and
\item $\delta(x_j) = x_{j+1}$ for $j = 1, 2, \ldots, g-1$, and $\delta(x_g) = x_1$.
\end{itemize}

Any orientation preserving homeomorphism of the splitting $(V, W; \Sigma)$ induces an automorphism of $\pi_1(W)$ in a natural way.
Conversely, given an automorphism $\eta$ of $\pi_1(W)$, one can construct an orientation preserving homeomorphism $\phi_{\eta}$ of $(V, W; \Sigma)$ that induces $\eta$.
More precisely, we have the following lemma.

\begin{lemma}
\label{lem:Nielsen_transformations}
Let $\eta$ be any automorphism of $\pi_1(W)$, and let $D$ be a primitive disk in $V$.
Then there exists an orientation preserving homeomorphism $\phi_{\eta}$ of $(V, W; \Sigma)$ that induces $\eta$, and furthermore there exists a sequence $E_1, E_2, \ldots, E_n$ of primitive disks in $V$ satisfying that
\begin{itemize}
\item $E_1 = D$ and $E_n = \phi_{\eta}(D)$, and
\item for each $i \in \{1, 2, \ldots, n-1 \}$, either $E_i = E_{i+1}$, or $E_i$ and $E_{i+1}$ are disjoint from each other and they have dual disks $E'_i$ and $E'_{i+1}$ respectively such that $E'_i$ is disjoint from $E_{i+1} \cup E'_{i+1}$ and $E'_{i+1}$ is disjoint from $E_i \cup E'_i$.
\end{itemize}
\end{lemma}

\begin{proof}
We will describe an orientation preserving homeomorphism of $(V, W; \Sigma)$ as an isotopy from the identity to the homeomorphism.
Choose a primitive system $\mathcal D = \{D_1, D_2, \ldots, D_g\}$ of $V$ such that $D_1 = D$, together with a dual system $\mathcal D' = \{D'_1, D'_2, \ldots, D'_g\}$ of $W$, and then assign symbols $x_1, x_2, \ldots, x_g$ to the circles $\partial D'_1, \partial D'_2, \ldots, \partial D'_g$ respectively.
Fix a base point inside $W$ and choose pairwise disjoint arcs $a_i$, for $i \in \{1, 2, \ldots, g\}$, connecting the base point to a point in $\partial D_i$ respectively so that each $a_i$ is disjoint from $D'_1 \cup D'_2 \cup \cdots \cup D'_g$.
Then, with a choice of orientations of boundary circles of $D_i$ and of $D'_j$, for $i, j \in \{1, 2, \ldots, g\}$, the circles $\partial D_1, \partial D_2, \ldots, \partial D_g$ represent the generators $x_1, x_2, \ldots, x_g$ of $\pi_1(W) = \langle x_1, x_2, \ldots, x_g \rangle$ respectively.
See Figure \ref{handlebody}.

\bigskip
\bigskip

\begin{center}
\labellist
 \pinlabel {$D'_g$} [B] at -11 210
 \pinlabel {$D'_1$} [B] at 163 326
 \pinlabel {$D'_2$} [B] at 340 242
 \pinlabel {$D'_3$} [B] at 347 84
 \pinlabel {$\partial D_g$} [B] at 86 153
 \pinlabel {$\partial D_1$} [B] at 143 220
 \pinlabel {$\partial D_2$} [B] at 225 209
 \pinlabel {$\partial D_3$} [B] at 255 140
 \pinlabel {$a_g$} [B] at 126 176
 \pinlabel {$a_1$} [B] at 178 194
 \pinlabel {$a_2$} [B] at 220 162
 \pinlabel {$a_3$} [B] at 206 119
 \pinlabel {\Large$W$} [B] at 143 60
 \endlabellist
\includegraphics[width=8cm]{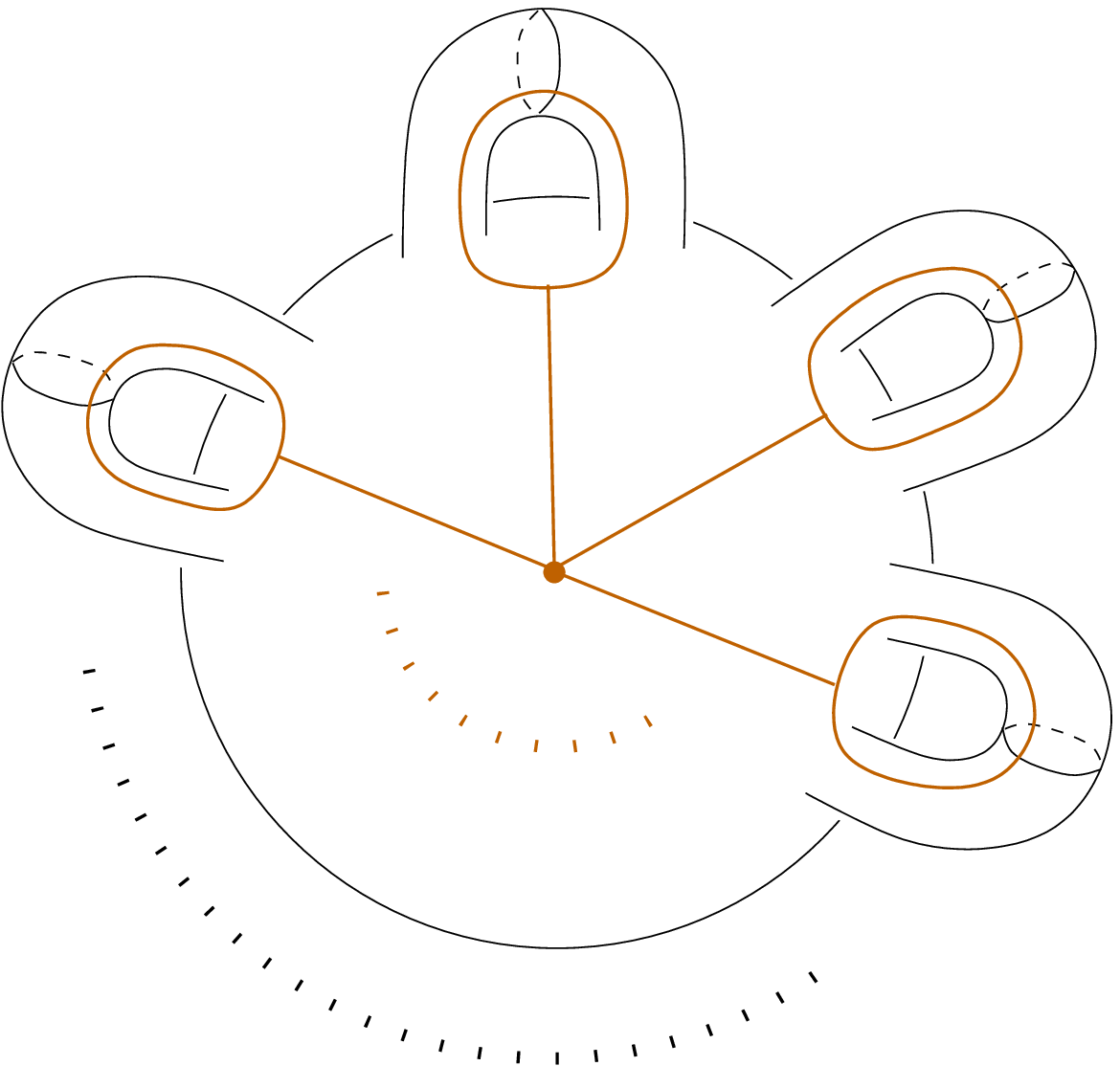}
\captionof{figure}{}
\label{handlebody}
\end{center}

\bigskip

First we construct the homeomorphisms $\phi_{\alpha}, \phi_{\beta}, \phi_{\gamma}$, and $\phi_{\delta}$ of the splitting $(V, W; \Sigma)$ that induce the four elementary Nielsen transformations $\alpha, \beta, \gamma$, and $\delta$ respectively.

For the transformation $\alpha$, we have the homeomorphism $\phi_{\alpha}$ of $(V, W; \Sigma)$ described by the isotopy in Figure \ref{homeomorphisms} (a).
We drag a foot of the first handle along the second handle to make the circle $\phi_{\alpha}(\partial D_1)$ determine the word $x_1 x_2$  with respect to $\mathcal D'$, while $\phi_{\alpha}(\partial D_j)$ determines $x_j$ for each $j \in \{2, 3, \ldots, g\}$.
After the isotopy for $\phi_{\alpha}$, all the disks in $\mathcal D \cup \mathcal D'$ except $D_1$ and $D'_2$ remain unchanged as subsets.

For the transformation $\beta$, the homeomorphism $\phi_{\beta}$ is described as a $\pi$-rotation of the first handle as in Figure \ref{homeomorphisms} (b).
The circle $\phi_{\beta}(\partial D_1)$ determines the word $x^{-1}_1$ with respect to $\mathcal D'$, while $\phi_{\beta}(\partial D_j)$ determines $x_j$ for $j \in \{2, 3, \ldots, g\}$.
After the isotopy for $\phi_{\beta}$, all the disks in $\mathcal D \cup \mathcal D'$ remain unchanged as subsets.

For the transformation $\gamma$, the homeomorphism $\phi_{\gamma}$ described by the isotopy in Figure \ref{homeomorphisms} (c) exchanges the first and the second handles so that $\phi_{\gamma}(\partial D_1)$ and $\phi_{\gamma}(\partial D_2)$ determine $x_2$ and $x_1$ respectively with respect to $\mathcal D'$, while $\phi_{\gamma}(\partial D_j)$ determines $x_j$ for $j \in \{3, 4, \ldots, g\}$.
The isotopy for $\phi_{\gamma}$ exchanges $D_1$ and $D'_1$ with $D_2$ and $D'_2$ respectively, but all other disks in $\mathcal D \cup \mathcal D'$ remain unchanged.

Finally, the homeomorphism $\phi_{\delta}$ described by the isotopy in Figure \ref{homeomorphisms} (d), which is a permutation of each handle to the next one, induces the transformation $\delta$.
For each $j \in \{1, 2, \ldots, g-1\}$, $\phi_{\delta}(\partial D_j)$ determines $x_{j+1}$, and $\phi_{\delta}(\partial D_g)$ determines $x_1$ with respect to $\mathcal D'$.
The isotopy for $\phi_{\delta}$ sends $D_j$ and $D'_j$ to $D_{j+1}$ and $D'_{j+1}$ respectively for $j \in \{1, 2, \ldots, g-1\}$, and sends $D_g$ and $D'_g$ to $D_1$ and $D'_1$ respectively.

\bigskip

\begin{center}
\labellist
 \pinlabel {(a)} [B] at 230 400
 \pinlabel {(b)} [B] at 230 270
 \pinlabel {(c)} [B] at 230 132
 \pinlabel {(d)} [B] at 230 0
 \pinlabel {$\phi_{\alpha}$} [B] at 229 442
 \pinlabel {$\phi_{\beta}$} [B] at 229 313
 \pinlabel {$\phi_{\gamma}$} [B] at 229 175
 \pinlabel {$\phi_{\delta}$} [B] at 229 36

 \pinlabel {$D'_1$} [B] at 59 480
 \pinlabel {$D'_1$} [B] at 59 347
 \pinlabel {$D'_1$} [B] at 59 210
 \pinlabel {$D'_1$} [B] at 59 71

 \pinlabel {$D'_2$} [B] at 152 480
 \pinlabel {$D'_2$} [B] at 152 347
 \pinlabel {$D'_2$} [B] at 152 210
 \pinlabel {$D'_2$} [B] at 152 71

 \pinlabel {$\partial D_1$} [B] at 59 397
 \pinlabel {$\partial D_1$} [B] at 59 265
 \pinlabel {$\partial D_1$} [B] at 79 130
 \pinlabel {$\partial D_1$} [B] at 84 -5

 \pinlabel {$\partial D_2$} [B] at 163 397
 \pinlabel {$\partial D_2$} [B] at 163 265
 \pinlabel {$\partial D_2$} [B] at 173 130
 \pinlabel {$\partial D_2$} [B] at 177 -5

 \pinlabel {$\phi_{\alpha}(D'_1)$} [B] at 308 483
 \pinlabel {$\phi_{\beta}(D'_1)$} [B] at 308 351
 \pinlabel {$\phi_{\gamma}(D'_1)$} [B] at 401 216
 \pinlabel {$\phi_{\delta}(D'_1)$} [B] at 401 74

 \pinlabel {$\phi_{\alpha}(D'_2)$} [B] at 355 469
 \pinlabel {$\phi_{\beta}(D'_2)$} [B] at 404 351
 \pinlabel {$\phi_{\gamma}(D'_2)$} [B] at 309 214
 \pinlabel {$\phi_{\delta}(D'_g)$} [B] at 309 75

 \pinlabel {$\phi_{\alpha}(\partial D_1)$} [B] at 279 397
 \pinlabel {$\phi_{\beta}(\partial D_1)$} [B] at 314 269
 \pinlabel {$\phi_{\gamma}(\partial D_1)$} [B] at 403 131
 \pinlabel {$\phi_{\delta}(\partial D_1)$} [B] at 430 -7

 \pinlabel {$\phi_{\alpha}(\partial D_2)$} [B] at 465 460
 \pinlabel {$\phi_{\beta}(\partial D_2)$} [B] at 407 269
 \pinlabel {$\phi_{\gamma}(\partial D_2)$} [B] at 314 131
 \pinlabel {$\phi_{\delta}(\partial D_g)$} [B] at 338 -7

 \endlabellist
\includegraphics[width=13cm]{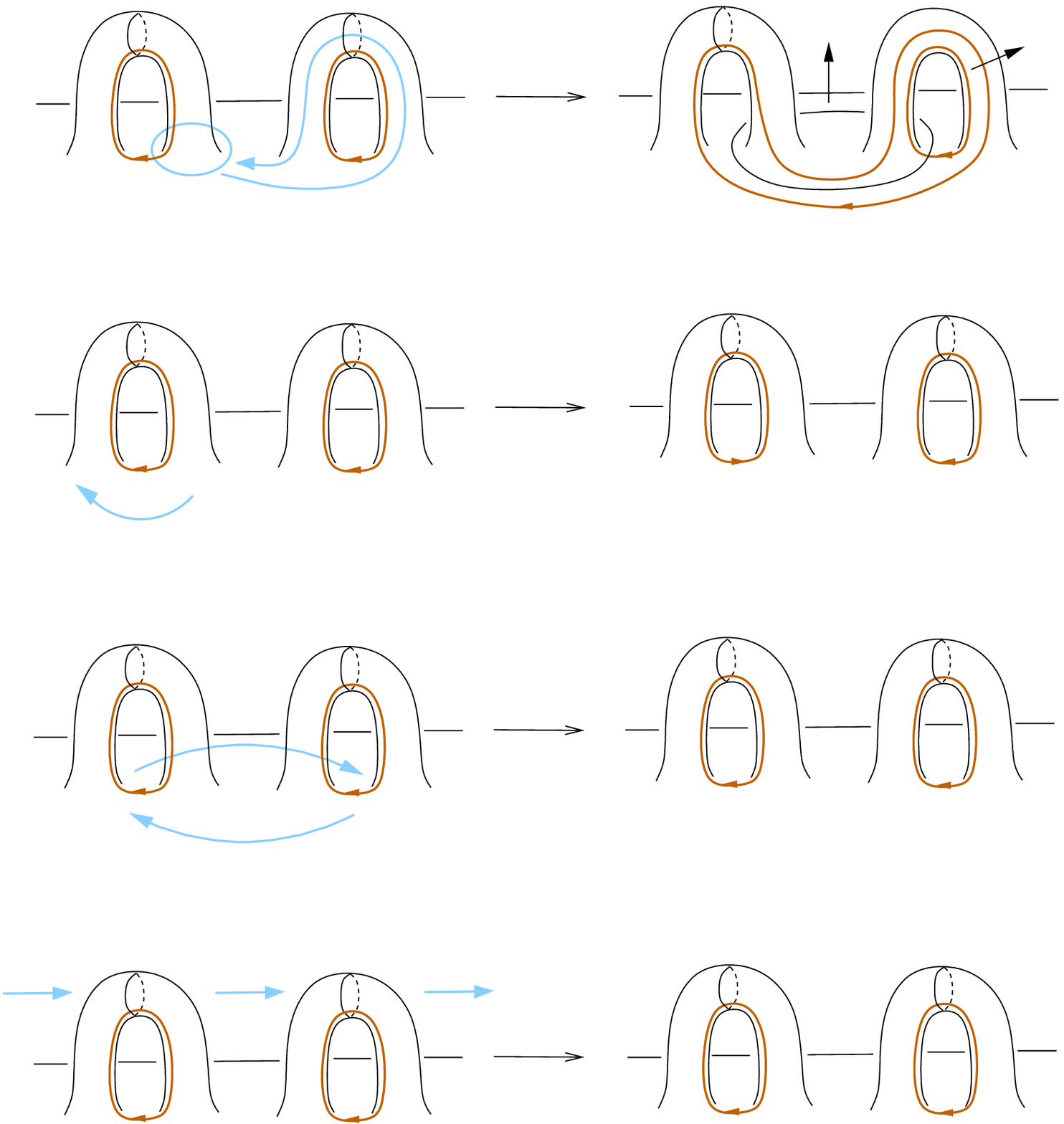}
\captionof{figure}{}
\label{homeomorphisms}
\end{center}

For each of the four homeomorphisms $\phi_{\alpha}$, $\phi_{\beta}$, $\phi_{\gamma}$, and $\phi_{\delta}$, we make the union of arcs $a_1 \cup a_2 \cup \cdots \cup a_g$ remain invariant after the isotopies.
Furthermore we observe that, if $\phi$ is one of $\phi_{\alpha}, \phi_{\beta}, \phi_{\gamma}$, or $\phi_{\delta}$, then either $D_j = \phi(D_j)$ or $D_j$ and $\phi(D_j)$ are disjoint from each other for each $j \in \{1, 2, \ldots, g\}$, and in this case we can easily find dual disks $E'$ and $E''$ of $D_j$ and $\phi(D_j)$ respectively, satisfying that $E'$ is disjoint from $\phi(D_j) \cup E''$ and $E''$ is disjoint from $D_j \cup E'$.
For example, for $D_1$ and $\phi_{\alpha} (D_1)$, we can take $E' = \phi_{\alpha} (D'_2)$ and $E'' = D'_2$.

Simply we say that the homeomorphisms $\phi_{\alpha}$, $\phi_{\beta}$, $\phi_{\gamma}$, and $\phi_{\delta}$ are {\it realizations} of $\alpha$, $\beta$, $\gamma$, and $\delta$ with respect to the dual pair $\{\mathcal D, \mathcal D'\}$.

Next, let $\eta$ be any automorphism of $\pi_1(W)$.
Then $\eta$ can be written as a finite product of elementary Nielsen transformations.
That is, $\eta = \eta_{n-1} \eta_{n-2} \cdots \eta_1$ for some $n \geq 2$ where $\eta_i$ is one of $\alpha$, $\beta$, $\gamma$, and $\delta$.
For $\eta_1$, we find a realization $\phi_1$, which is one of $\phi_{\alpha}$, $\phi_{\beta}$, $\phi_{\gamma}$, or $\phi_{\delta}$ with respect to the dual pair $\{\mathcal D, \mathcal D'\}$, described in the above.
For $\eta_2$, letting $\mathcal D_1 = \{\phi_1(D_1), \phi_1(D_2), \ldots, \phi_1(D_g)\}$ and $\mathcal D'_1 = \{\phi_1(D'_1), \phi_1(D'_2), \ldots, \phi_1(D'_g)\}$, we have a realization $\phi_2$, which is one of $\phi_{\alpha}$, $\phi_{\beta}$, $\phi_{\gamma}$, or $\phi_{\delta}$ with respect to the dual pair $\{\mathcal D_1, \mathcal D'_1\}$.
Continuing the process, we have a realization $\phi_i$ of $\eta_i$ for each $i \in \{2, \ldots, n-1\}$, with respect to the dual pair $\{\mathcal D_{i-1}, \mathcal D'_{i-1}\}$, where $\mathcal D_{i-1}$ and $\mathcal D'_{i-1}$ consist of disks $\phi_{i-1} \cdots \phi_2 \phi_1 (D_j)$ and $\phi_{i-1} \cdots \phi_2 \phi_1 (D'_j)$ respectively, for $j \in \{1, 2, \ldots, g\}$.
Then the product $\phi_{n-1} \phi_{n-2} \cdots \phi_1$ is the desired homeomorphism $\phi_{\eta}$ that induces $\eta$, and letting $D = E_1$, and $\phi_{i-1} \cdots \phi_2 \phi_1 (D) = E_i$ for $i \in \{2, 3, \ldots, n\}$ we have the desired sequence of primitive disks in $V$ satisfying the condition.
\end{proof}

\begin{proof}[Proof of Theorem $\ref{thm:main_theorem}$]
Choose any orientation preserving homeomorphism $\phi$ of $(V, W; \Sigma)$ taking $D$ to $E$.
Let $\eta$ be the automorphism of $\pi_1(W)$ induced by $\phi$.
By Lemma \ref{lem:Nielsen_transformations}, there exists another orientation preserving homeomorphism $\phi_{\eta}$ of $(V, W; \Sigma)$ that induces $\eta$, together with a sequence $E_1, E_2, \ldots, E_n$ of primitive disks in $V$ satisfying the condition in the theorem.
It is clear that $E = \phi(D)$ and $E_n = \phi_{\eta}(D)$ are equivalent to each other.
\end{proof}

\bibliographystyle{amsplain}

\end{document}